\title{Loop-fusion cohomology and transgression}
\author{Chris Kottke}
\address{Northeastern University\\Department of Mathematics}
\email{c.kottke@neu.edu}
\author{Richard Melrose}
\address{MIT\\Department of Mathematics}
\email{rbm@math.mit.edu}
\thanks{The second author received partial support during the
  preparation of this paper under NSF grant  DMS-1005944.}
\begin{document}
\begin{abstract} 
`Loop-fusion cohomology' is defined on the continuous loop space of a manifold
in terms of \v Cech cochains satisfying two multiplicative conditions with
respect to the fusion and figure-of-eight products on loops. The main result is
that these cohomology groups, with coefficients in an abelian group, are
isomorphic to those of the manifold and the transgression homomorphism factors
through the isomorphism.
\end{abstract}
\maketitle

In this note we present a refined \v Cech cohomology of the continuous free
loop space $\cL M$ of a manifold $M$ (or we could work throughout with the
energy space instead). Compared to the standard theory, the cochains are
limited by multiplicativity conditions under two products on loops, the fusion
product (defined by Stolz and Teichner \cite{Stolz-Teichner2005}) and the
figure-of-eight product (which appears implicitly in Barrett \cite{barrett} and
explicitly in \cite{Kottke-Melrose-FLS}). The main result of this paper is that
the resulting `loop-fusion' cohomology, $\vHffe^\bullet(\cL M; A),$ recovers
the cohomology of the manifold directly on the loop space.

\begin{figure}[h]
\begin{tikzpicture}
\tikzset{->-/.style={decoration={ markings, mark=at position #1 with {\arrow{to}}},postaction={decorate}}} 
\matrix (m) [matrix of nodes, column sep=2cm]
{(a) \begin{tikzpicture}[baseline=(current bounding box.north), thick, scale=0.5,rotate=-30]
\fill (-2,-2) circle (4pt);
\fill (2,2) circle (4pt);
\draw[->-=.5] (-2,-2) .. controls (-0.3,0.3) and (0.3,-0.3)  .. (2,2);
\draw[->-=.5] (-2,-2)  .. controls (-1.5,-1) and (-1,0) .. (-1,1) 
.. controls (-1,2) and (1,3).. (1,2) .. controls (1,1) and (2,2).. (2,2);
\draw[->-=.5] (-2,-2)  .. controls (-2,-2) and (0.2,-2.2) .. (1.2,-1.2) 
.. controls (2.2,-0.2) and (2,2).. (2,2);
\end{tikzpicture}
&
(b) \begin{tikzpicture}[baseline=(current bounding box.north), thick, scale=0.5, rotate=-25]
\fill (-2,-2) circle (4pt);
\fill (0,0) circle (4pt);
\fill (2,2) circle (4pt);
\draw[->-=.5] (-2,-2) .. controls (-1,-2) and (0,-1)
.. 
(0,0) ;
\draw[->-=.5] (0,0) .. controls (1,0) and (2,1)
.. 
(2,2) ;
\draw[->-=.5] (-2,-2) .. controls (-2,-1) and (-1,0).. 
(0,0) ;
\draw[->-=.5] (0,0)
.. controls (0,1) and (1,2) .. 
(2,2);
\end{tikzpicture} \\};
\end{tikzpicture}
\caption{Fusion (a) and figure-of-eight (b) configurations.}
\label{F:figure}
\end{figure}

\begin{thmstar} For each $k \geq 1$ and discrete abelian group $A$ there is an 
enhanced transgression isomorphism
\[
	\lfT : \vH^k(M; A) \stackrel \cong \to \vHffe^{k-1}(\cL M; A),
\]
forming a commutative diagram with the forgetful map, $f,$ to ordinary
cohomology and the standard transgression map $T:$
\begin{equation}
\begin{tikzpicture}[baseline=(current bounding box.center), ->,>=to,auto]
\matrix (m) [matrix of math nodes, column sep=1cm, row sep=1cm, text height=2ex, text depth=0.25ex]
{\vH^k(M;A) &  \vH^{k-1}_{\ffe}(\cL M;A) \\ & \vH^{k-1}(\cL M;A).
\\};
\path (m-1-1) edge node {$\lfT$} (m-1-2); 
\path (m-1-2) edge node {$f$} (m-2-2); 
\path (m-1-1) edge node[below] {$T$} (m-2-2); 
\end{tikzpicture}
\label{E:TRDiag}
\end{equation}
\label{T:ffe_isomorphism}
\end{thmstar}
\noindent For $A = \bbZ$ and $k=2$ or $k=3$ this result
appears in \cite{Kottke-Melrose-FLS}. There the cohomology
classes are represented geometrically by functions and circle bundles over the
loop space which satisfy the fusion property and are reparameterization
equivariant; the figure-of-eight condition follows from these conditions.

The case $k=2$ with integer coefficients is closely related to the problem of
recovering a circle bundle on $M$ up to isomorphism from its holonomy as a
function on $\cL M$, which has been considered by Teleman \cite{teleman},
Barrett \cite{barrett} and Caetano-Picken \cite{caetano}. In \cite{Waldorf-I},
Waldorf considers principal bundles for general abelian groups and makes explicit use of
the fusion product. The case $k = 3$ corresponds to an association between
gerbes on $M$ and circle bundles on $\cL M.$ Such a construction was first
given by Brylinski \cite{Brylinski3}, and in \cite{Brylinski-McLaughlin},
Brylinski and McLaughlin point out that the resulting bundle on the smooth loop
space has an action by $\Diff(\bbS)$ and is multiplicative with respect to the
composition of loops based at the same point. In \cite{Waldorf-II},
\cite{Waldorf-III} Waldorf identifies the fusion property for bundles on $\cL
M$ given by the transgression of gerbes, and uses this to define an inverse
functor.

The extension of such results to $k\ge3$ to give an explicit transgression
of geometric objects, such as higher gerbes, faces the usual obstacles
associated with compatibility conditions. Here, the use of \v Cech
cohomology allows for a short and unified treatment of the general case. In
particular this shows that the two conditions included in the loop-fusion structure,
without equivariance with respect to the variable on the circle or thin
homotopy equivalence, suffice to capture the cohomology of $M.$

\section{Spaces, covers and \v Cech cohomology} \label{S:covers}
\subsection{Base space} \label{S:base}
Let $M$ be a compact smooth manifold. In the subsequent discussion we fix
a Riemann metric on $M$ and $\epsilon >0$ smaller than the injectivity
radius although refinement arguments show that none of the results depend
on these choices. For each $m \in M$ let $U_m$ be the open geodesic ball of
radius $\epsilon > 0$ centered at $m$ and consider the disjoint union of
these balls as a cover of $M:$ 
\[
	\cU = \bigsqcup_{m \in M} U_m \to M.
\]
This is a good cover: for $k\ge1,$ each of the $k$-fold intersections is
empty or contractible. The disjoint union of these intersections is
equivalent to the fiber product
\[
	\cU^{(k)} = \cU\times_{M} \cdots \times_{M} \cU \to M.
\]

\begin{rmk}\label{R:Rmk.1} It is convenient to work with `maximal' covers parameterized by
  the space itself. However it is possible throughout the discussion below
  to restrict to countable covers as is more conventional in \v Cech
  theory. Indeed, one can work here with the cover of $M$ by neighborhoods
  with centers at a countable dense subset. See the subsequent remark on
  paths and loops.

Also, though we have assumed $M$ to be smooth and compact for convenience, the
result we present applies to a wider variety of spaces. Indeed, we only use
that $M$ has a good cover, with respect to which there are compatible good
covers of the path and loop spaces as below. 
\end{rmk}

The collection $\set{M^n : n \geq 1}$ forms a simplicial space with the
projections $\pi_i : M^n \to M^{n-1}$, $1 \leq i \leq n$, as face maps with the
convention that $\pi_i$ omits the $i$th factor. Similarly $\set{\cU^n : n \geq
1}$ is a simplicial space, with face maps also denoted $\pi_i$; each $\cU^n \to
M^n$ is also a good cover. Differentials deriving from this simplicial
structure will be denoted by $\pa.$

For each fixed $n$ the successive fiber products $\set{(\cU^{(k)})^n \equiv
  (\cU^n)^{(k)}: k \geq 1}$ also form a simplicial space with face maps
$\iota_j : (\cU^n)^{(k+1)} \to (\cU^n)^{(k)}$ the inclusions of
$(k+1)$-fold intersections of the open sets into the $k$-fold
intersections. This second simplicial space underlies the \v Cech cohomology of
$M^n$. Indeed, for an abelian group $A$ the \v Cech cochains on $M^n$ with
respect to $\cU^n$ are the locally constant maps
\[
	\vC^k(M^n; A) \ni \alpha : (\cU^n)^{(k+1)} \to A, \quad k \in \bbN
\]
with differential
\begin{equation}
\begin{gathered}
	\delta : \vC^{k}(M^n; A) \to \vC^{k+1}(M^n; A), \\
	\delta \alpha = \prod_{j=1}^{k+2} \iota_j^* f^{(-1)^j} : (\cU^n)^{(k+2)} \to A.
\end{gathered}
	\label{E:cech_differential}
\end{equation}
Note that these are {\em unoriented} \v Cech cochains, so that $\alpha$ is not
required to be odd with respect to permutations acting on the fiberwise factors
of $\cU^{(k)} \to M$.

For a good cover such as $\cU^n,$ the \v Cech cohomology is isomorphic to
the ordinary cohomology of $M^n$ \cite{godement}:
\[
\vH^\bullet(M^n; A) := H^\bullet(\vC^\bullet(M^n; A), \delta) \cong H^\bullet(M^n; A).
\]

\begin{lem} For each $k$, the sequence
\[
\begin{gathered}
	\vH^k(M; A) \stackrel \pa \to \vH^k(M^2; A) \stackrel \pa \to \vH^k(M^3; A) \stackrel \pa \to \cdots \\
	\pa : \vH^k(M^n; A) \ni \alpha \to
\prod_{j=1}^{n+1} \pi_j^\ast \alpha^{(-1)^j} \in \vH^k(M^{n+1}; A) 
\end{gathered}
\]
is exact.
\label{L:ex_cohom_M}
\end{lem}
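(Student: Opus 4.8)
The plan is to identify the displayed sequence with the alternating-sum cochain complex of a cosimplicial abelian group that arises from an augmented simplicial space admitting a contraction; such complexes are acyclic, by the standard extra-degeneracy argument. Concretely, $\{M^{n+1}:n\ge0\}$ with the face maps $\pi_i$ and the doubling degeneracies is the \v Cech nerve of $M\to\mathrm{pt}$, and applying the contravariant functor $\vH^k(-;A)$ — which by the good-cover theorem we identify with ordinary cohomology, so that it is homotopy invariant and functorial — turns this into a cosimplicial abelian group whose associated complex is exactly $\vH^k(M;A)\to\vH^k(M^2;A)\to\cdots$; the discrepancy between the sign $(-1)^j$ appearing in the statement and the usual $(-1)^{j-1}$ is an irrelevant overall sign. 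Since $\vH^k(\mathrm{pt};A)=0$ for $k\ge1$, acyclicity of this complex is the same as the asserted exactness, including injectivity of the first map $\pa:\vH^k(M;A)\to\vH^k(M^2;A)$. I treat $k\ge1$, which is the range that enters the main theorem (the case $k=0$, not needed below, is recovered with reduced coefficients when $M$ is connected).

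First I would fix a point $m_0\in M$ and, for each $n$, introduce the map $t:M^n\to M^{n+1}$ inserting $m_0$ in the first slot, $t(x_1,\dots,x_n)=(m_0,x_1,\dots,x_n)$. A one-line check on points shows that these constitute an extra degeneracy for the \v Cech nerve: $\pi_1\circ t=\mathrm{id}$ and $\pi_j\circ t=t\circ\pi_{j-1}$ for $j\ge2$, so that the augmented simplicial space $M^{\bullet+1}\to\mathrm{pt}$ is split. Since $t$ carries the cover $\cU^{n+1}$ back into a subfamily of $\cU^n$, the pullback $t^\ast$ is already defined at the level of \v Cech cochains (alternatively one simply uses $\vH^\bullet\cong H^\bullet$), yielding homomorphisms $h=t^\ast:\vH^k(M^{n+1};A)\to\vH^k(M^n;A)$.

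The next step is to verify that $h$ is a contracting homotopy, $\pa h+h\pa=\pm\mathrm{id}$, which is dual to the familiar simplicial computation. Applying $t^\ast$ to $\pa\alpha=\prod_j\pi_j^\ast\alpha^{(-1)^j}$, the $j=1$ factor becomes $\pm\alpha$ because $\pi_1\circ t=\mathrm{id}$, while the identities $\pi_j\circ t=t\circ\pi_{j-1}$ recombine the remaining factors into $\pm\pa(h\alpha)$; at the bottom of the complex the factor corresponding to the omitted first slot is a constant map and contributes nothing, by $\vH^k(\mathrm{pt};A)=0$. Acyclicity, hence the lemma, follows. If one prefers to avoid writing $h$ out, it suffices to invoke the general fact that a simplicial object with an extra degeneracy has a chain-contractible associated complex, whence the same holds after applying any cohomology functor; I give $h$ explicitly because this formula, rather than its bare existence, is what gets used in later sections.

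There is no real conceptual difficulty here — the work is bookkeeping — and the only point requiring care is matching the paper's convention that $\pi_i$ \emph{omits} the $i$-th factor to the standard simplicial indexing, and thereby pinning down the signs in $\pa h+h\pa=\pm\mathrm{id}$. In particular the extra degeneracy must be taken at the correct end, namely by inserting $m_0$ as the first coordinate so that it is compatible with $\pi_1$, and one must remember that $\vH^k(\mathrm{pt};A)$ vanishes only for $k\ge1$.
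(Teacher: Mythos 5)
Your proof is correct and is essentially the paper's own argument: the paper likewise fixes a basepoint $\bar m$, uses the insertion maps $i_n(m_1,\ldots,m_n)=(\bar m,m_1,\ldots,m_n)$ with the identities $\pi_1\circ i_n=\id$, $\pi_j\circ i_n=i_{n-1}\circ\pi_{j-1}$, and reads off the contracting homotopy $i_n^\ast\pa\alpha=\alpha^{-1}\,\pa(i_{n-1}^\ast\alpha^{-1})$, which is exactly your extra-degeneracy contraction $h=t^\ast$ written multiplicatively. The only difference is presentational (you invoke the general split-simplicial-object principle where the paper does the one-line computation directly), and your explicit care with $\vH^k(\mathrm{pt};A)=0$ for $k\ge1$ at the bottom of the complex is consistent with how the lemma is actually used.
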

\begin{proof}
The same computation as for the \v Cech differential \eqref{E:cech_differential}
shows that $\pa^2 = 0$. Fix a point $\bar m \in M$ and consider the
inclusions 
\[
	i_n : M^n \hookrightarrow M^{n+1}, \quad
	(m_1,\ldots, m_n) \mapsto (\bar m, m_1,\ldots, m_n).
\]
Then
\[
	\pi_j \circ i_n = \begin{cases} \id & j = 1, \\ i_{n-1} \circ \pi_{j-1} & j \geq 2, \end{cases}
\]
as maps from $M^n$ to $M^n$ and for $\alpha \in\vH^{k}(M^n;A),$ 
\[
	i_n^*\pa \alpha = \prod_{j=1}^{n+1} i_n^*\pi_j^*\alpha^{(-1)^j} 
	= \alpha^{-1}\,\pns{\pa i_{n-1}^*\alpha^{-1}}.
\]
Thus if $\pa\alpha =1$ then $\alpha =\pa i_{n-1}^*\alpha^{-1}.$
\end{proof}

\subsection{Path space} \label{S:path}
Let $\cI M = \cC([0,1]; M)$ be the free continuous path space of $M;$ it is
a Banach manifold which fibers over $M^2$ by the endpoint map
\[
\varepsilon : \cI M \to M^2, \ \varepsilon(\gamma) = \big(\gamma(0),\gamma(1)\big).
\]
We make use of the {\em join product} 
\begin{equation}
\begin{gathered}
	j : \pi_{3}^* \cI M \times_{M^3} \pi_{1}^* \cI M \to \pi_{2}^* \cI M \\
	j(\gamma_1,\gamma_2)(t) = \begin{cases} \gamma_1(2t) & 0 \leq t \leq 1/2, \\ \gamma_2\big(2(t - 1/2)\big) & 1/2 \leq t \leq 1, \end{cases}
\end{gathered}
	\label{E:join}
\end{equation}
where $(\gamma_1,\gamma_2) \in \pi_3^* \cI M \times_{M^3} \pi_1^* \cI M$ if and
only if $\gamma_1(1) = \gamma_2(0)$. Note that \eqref{E:join} is a
bijection and hence $\pi_3^*\cI M \times_{M^3} \pi_1^* \cI M$ can be
identified with $\cI M$ fibering over $M^3$ by the 
map $\gamma \mapsto (\gamma(0),\gamma(1/2),\gamma(1)).$

For $\gamma \in \cI M$, let $\Gamma_\gamma = \set{\gamma' \in \cI M : \sup_t
\abs{\gamma(t)-\gamma'(t)} < \epsilon}$ be the set of paths lying pointwise
within the metric tube of radius $\epsilon$ around $\gamma.$ Proceeding as
above and setting
\[
	\Gamma = \bigsqcup_{\gamma \in \cI M} \Gamma_\gamma \to \cI M, \quad
	\Gamma^{(k)} = \Gamma \times_{\cI M} \cdots \times_{\cI M} \Gamma,
\]
gives a good cover of $\cI M$, which factors through $\cU^2$, i.e.\ the
diagram
\begin{equation}
\begin{tikzpicture}[baseline=(current bounding box.center), ->,>=to,auto]
\matrix (m) [matrix of math nodes, column sep=1cm, row sep=1cm, text height=2ex, text depth=0.25ex]
{\Gamma^{(k)}  & (\cU^2)^{(k)} \\ \cI M & M^2 \\};
\path (m-1-1) edge node {$\varepsilon$} (m-1-2); 
\path (m-1-2) edge (m-2-2); 
\path (m-2-1) edge node {$\varepsilon$} (m-2-2); 
\path (m-1-1) edge (m-2-1); 
\end{tikzpicture}
	\label{E:compat_covers}
\end{equation}
commutes for each $k$. Furthermore, join lifts to a well-defined map
\begin{equation}
j : \pi_{3}^* \Gamma^{(k)} \times_{(\cU^3)^{(k)}} \pi_{1}^* \Gamma^{(k)} \to \pi_{2}^* \Gamma^{(k)},
	\label{E:lift_join}
\end{equation}
and there is a natural identification of $\pi_3^* \Gamma^{(k)}
\times_{(\cU^3)^{(k)}} \pi_1^* \Gamma^{(k)}$ with $\Gamma^{(k)}$.

\begin{rmk}\label{R:Rmk.2} 
As noted in Remark~\ref{R:Rmk.1} above, it is possible to work throughout with
countable covers. One can restrict to neighborhoods centered on paths which are
finite combinations of segments with rational end-points and which are affine
geodesics between the chosen countable dense set in the manifold. The resulting
cover has the crucial property of being closed under join, and the induced
countable cover of loop space, considered below, is closed with respect to the
two loop-fusion operations. It is also possible to work over a more general space, 
provided $M$ and $\cI M$ have good covers satisfying \eqref{E:compat_covers} and \eqref{E:lift_join}.
\end{rmk}

The definition of the \v Cech cochain complex above carries over to $\cI M$ (finite
dimensionality of $M$ was not used there) giving
\[
	\vC^k(\cI M; A) \ni f : \Gamma^{(k+1)} \to A, 
	\quad \delta f = \prod_{j=1}^{k+2} \iota_j^* f^{(-1)^j} \in \vC^{k+1}(\cI M; A),
\]
where we reuse the notation $\iota_j : \Gamma^{(k+1)} \to \Gamma^{(k)}$ for the
face maps of the simplicial space $\set{\Gamma^{(k)} ; k \geq 1},$ and observe
that again $\vH^k(\cI M; A) \cong H^k(\cI M; A)$ since $\Gamma$ is a good cover.

The identification of $\pi_3^*\Gamma\times_{\cU^3}\pi_1^*\Gamma$ with
$\Gamma$ and \eqref{E:lift_join} gives a second chain map on $\vC^\bullet(\cI M
; A)$ associated to the simplicial structure on $\set{M^n : n \geq 1}$:
\[
\bar \pa : \vC^k(\cI M; A) \to \vC^k(\cI M; A), \quad
\bar \pa f = \pi_3^* f^{-1}\,\pi_1^* f^{-1}\,j^*(\pi_2^* f) : \Gamma^{(k)} \to A.
\]
This does not lead to a complex, i.e.\ $\bar \pa^2$ is not trivial, since $\cI
M$ is not itself a simplicial space over $\set{M^n : n \geq 1}$; 
reparameterization is required to compare pullbacks of paths.

The constant paths may be identified as an inclusion $M \subset \cI M.$ Let
\[
	\vC_0^k(\cI M; A)  = \set{f \in \vC^k(\cI M; A) : f \rst_{M} = 1}
\]
denote the subcomplex of cochains which are trivial on them. Since the join
map restricts to the trivial map on constant paths $\bar \pa : \vC_0^\bullet(\cI
M; A) \to \vC_0^\bullet(\cI M ; A).$

\begin{lem}
The subcomplex $(\vC_0^\bullet(\cI M; A),\delta)$ is acyclic.
\label{L:path_subcomplex}
\end{lem}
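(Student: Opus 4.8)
The plan is to recognise $(\vC_0^\bullet(\cI M;A),\delta)$ as the relative \v Cech complex of the pair $(\cI M,M)$ with respect to the cover $\Gamma$, and to derive its acyclicity from the fact that the inclusion of the constant paths, $M\subset\cI M$, is a homotopy equivalence. That inclusion is in fact a deformation retract: the maps $H_s:\cI M\to\cI M$, $H_s(\gamma)(t)=\gamma(st)$, run from $H_1=\id$ to the composite of the retraction $\gamma\mapsto\gamma(0)$ with $M\subset\cI M$, and each $H_s$ fixes every constant path.

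First I would check that $\Gamma$ restricts to a good cover $\Gamma\rst_M$ of $M$. A component of $\Gamma$ lying over $M$ is $\Gamma_\gamma\cap M=\bigcap_{t\in[0,1]}U_{\gamma(t)}$, an intersection of geodesic $\epsilon$-balls; assuming as usual that $\epsilon$ is small enough that such balls are geodesically convex (the same hypothesis used to see that $\cU$ is a good cover), this set, and any finite intersection of such sets, is geodesically convex and hence contractible or empty. Therefore the \v Cech complex of $M$ built from $\Gamma\rst_M$ again computes $H^\bullet(M;A)$, and restriction of cochains defines a chain map $\rho:\vC^\bullet(\cI M;A)\to\vC^\bullet(M;\Gamma\rst_M;A)$ whose kernel is exactly $\vC_0^\bullet(\cI M;A)$.

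Next I would verify that $\rho$ is onto, so that
\[
0\to\vC_0^\bullet(\cI M;A)\to\vC^\bullet(\cI M;A)\stackrel{\rho}\to\vC^\bullet(M;\Gamma\rst_M;A)\to0
\]
is a short exact sequence of complexes. The connected components of $\Gamma^{(k+1)}$ are indexed by tuples $(\gamma_0,\dots,\gamma_k)$ with $\bigcap_i\Gamma_{\gamma_i}\neq\emptyset$ (such an intersection being itself convex, by pointwise geodesic interpolation in the convex balls), and the part of such a component lying over $M$ is $\bigcap_i(\Gamma_{\gamma_i}\cap M)$, again convex and so connected or empty. Since \v Cech cochains are locally constant, a cochain $g$ on $(\Gamma\rst_M)^{(k+1)}$ extends to $\Gamma^{(k+1)}$ by giving each component the value $g$ takes on its part over $M$ (and the value $1$ on components with empty such part), producing $f$ with $\rho f=g$.

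Finally I would run the long exact cohomology sequence of this short exact sequence. Under the standard comparison isomorphisms of \v Cech with singular cohomology, $\rho$ induces the pullback $H^\bullet(\cI M;A)\to H^\bullet(M;A)$ along $M\subset\cI M$, which is an isomorphism since that inclusion is a homotopy equivalence; hence every connecting homomorphism vanishes and $\vH^k(\vC_0^\bullet(\cI M;A))=0$ for all $k$. I expect the only genuine work to be in the two middle steps — that $\Gamma\rst_M$ is a good cover and that $\rho$ is surjective — both of which follow from convexity of finite intersections of geodesic balls in $M$ (and, pointwise, of the tubes $\Gamma_\gamma$ in $\cI M$); should one wish to invoke the nerve theorem in its classical form, one first passes to a countable subcover as in Remarks~\ref{R:Rmk.1} and~\ref{R:Rmk.2}.
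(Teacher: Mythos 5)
Your argument is correct and follows essentially the same route as the paper: the paper's proof is exactly the short exact sequence $0 \to \vC_0^\bullet(\cI M; A) \to \vC^\bullet(\cI M; A) \to \vC^\bullet(M; A) \to 0$ together with the long exact sequence and the deformation retraction of $\cI M$ onto the constant paths. The only difference is that you spell out details the paper leaves implicit, namely that the restricted cover of $M$ is good and that the restriction map on cochains is surjective, both via convexity of intersections of small geodesic balls.
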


\begin{proof} The short exact sequence of chain complexes
\[
	0 \to \vC_0^\bullet(\cI M; A) \to \vC^\bullet(\cI M; A) \to \vC^\bullet(M; A) \to 0
\]
induces a long exact sequence in cohomology, however $\vH^\bullet(\cI M; A)
\cong \vH^\bullet(M; A)$ since there is a deformation retraction of $\cI M$
onto $M$, from which it follows that $\vH_0^\bullet(\cI M; A) = 0$.
\end{proof}

\subsection{Loop space} \label{S:loop}
For $l \geq 1$ we denote by $\cI^{[l]} M$ the fiber product
\[
	\cI^{[l]} M = \cI M \times_{M^2} \cdots \times_{M^2} \cI M,
\]
and observe that $\cI^{[2]} M = \set{(\gamma_1,\gamma_2) : \gamma_1(t) =
\gamma_2(t), \ t = 0,1}$ may be identified with the Banach manifold of free continuous
loops by {\em fusion} of paths:
\[
\psi : \cI^{[2]} M \stackrel \cong \to \cL M = \cC(\bbS; M), \quad 
\ell(t) = \psi(\gamma_1, \gamma_2)(t) = \begin{cases} \gamma_1(t), & 0 \leq t \leq 1 \\
\gamma_2(-t), & -1 \leq t \leq 0 \end{cases}
\]
where $\bbS$ is parameterized as $[-1,1]/(\set{-1}\sim\set{1})$ for later convenience.

The set $\set{\cI^{[l]} M : l \geq 1}$ forms another simplicial space, with face maps
given by the fiber projections $\varrho_j : \cI^{[l]} M \to \cI^{[l-1]} M$, $1
\leq j \leq l,$ and $\set{\Gamma^{[l]} : l \geq 1}$ forms a good cover, where
\[
	\Gamma^{[l]} = \Gamma \times_{\cU^2} \cdots \times_{\cU^2} \Gamma \to \cI^{[l]} M,
\]
is lifted from the path space with $k$-fold overlaps
\[
(\Gamma^{(k)})^{[l]} \equiv (\Gamma^{[l]})^{(k)} = \Gamma^{[l]} \times_{\cI^{[l]} M} \cdots \times_{\cI^{[l]} M} \Gamma^{[l]}.
\]
For clarity of notation, we denote this cover of loop space by
\[
	\Lambda = \Gamma^{[2]} \to \cL M.
\]
We will denote differentials derived from this simplicial space or
its cover by $d$.  

Passing to $\cI^{[l]} M$ in \eqref{E:join} gives rise to a map
\begin{equation}
	j^{[l]} : \pi_3^* \cI^{[l]} M \times_{M^3} \pi_1^* \cI^{[l]} M \to \pi_2^* \cI^{[l]} M,
	\label{E:join_l}
\end{equation}
and its local version
\begin{equation}
	j^{[l]} : \pi_3^* (\Gamma^{[l]})^{(k)} \times_{(\cU^3)^{(k)}} \pi_1^* (\Gamma^{[l]})^{(k)} 
	\to \pi_2^*(\Gamma^{[l]})^{(k)}.
	\label{E:join_l_local}
\end{equation}

In the case $l = 2$, we call this the {\em figure-of-eight product} on loops as
in \cite{Kottke-Melrose-FLS}. The product of two loops $\ell_1 =
\psi(\gamma_{11},\gamma_{12})$ and $\ell_2 = \psi(\gamma_{21},\gamma_{22})$
such that $\ell_1(1) = \ell_2(0)$ is the loop $\ell_3 =
\psi\big(j(\gamma_{11},\gamma_{21}), j(\gamma_{12},\gamma_{22})\big).$ See Figure~\ref{F:figure}.(b).
The domain in \eqref{E:join_l} with $l = 2$ may be identified with the subspace of {\em
figure-of-eight loops} in $M:$
\[
\cL_8 M = \set{\ell \in \cL M : \ell(1/2) = \ell(-1/2)} \longrightarrow M^3.
\]
This Banach manifold fibers over $M^3$ and has a good cover given by the
domain in \eqref{E:join_l_local} with $l = 2$ and $k = 1.$ Unlike the case
$l = 1$, $\cL_8M$ cannot be identified with the full loop space nor is $j^{[2]}$ invertible.

There is another product on loop space, considered already in
\cite{Stolz-Teichner2005}, associated to $\cI^{[3]}M.$ If
$(\gamma_1,\gamma_2,\gamma_3) \in \cI^{[3]} M$, then $\ell_3 =
\psi(\gamma_1,\gamma_3)$ is the {\em fusion product} (on loops) of $\ell_1 =
\psi(\gamma_1,\gamma_2)$ and $\ell_2 = \psi(\gamma_2,\gamma_3).$ See Figure~\ref{F:figure}.(a).

Within the \v Cech cochain complex $\big(\vC^\bullet(\cL M ;A),\delta\big)$ for loop space:
\[
	\vC^k(\cL M; A) \ni f : \Lambda^{(k+1)} \to A, \quad \delta f = \prod_{j=1}^{k+2} \iota_j^* f^{(-1)^j} \in \vC^{k+1}(\cL M; A),
\]
consider the subcomplex of {\em fusion cochains}
\[
\begin{gathered}
	\vCfus^k(\cL M; A) = \set{ f \in \vC^k(\cL M; A) : df = 1} \\
	df = \varrho_1^*f^{-1}\,\varrho_2^* f\,\varrho_3^* f^{-1} \in \vC^k(\cI^{[3]} M; A).
\end{gathered}
\]
Note that $d^2 : \vC^k(\cI^{[l]}; A) \to \vC^k(\cI^{[l+2]} M; A)$ is trivial
and $\delta d = d\delta$ so this is indeed a subcomplex.

The subspace $\cL_8M \subset \cL M$ is closed under fusion so
$\vCfus^\bullet(\cL_8M; A)$ is well-defined, and imposing a
condition over the figure-of-eight product leads to the \emph{loop-fusion}
subcomplex
\begin{equation}
\begin{gathered}
\vCffe^k(\cL M; A) = \set{f \in \vCfus^k(\cL M ; A) : \bar \pa f = \delta
  g\text{ for }g \in \vCfus^{k-1}(\cL_8 M; A)}, \\
\bar \pa f = \pi_3^*f^{-1}\,\pi_1^* f^{-1}\,(j^{[2]})^*(\pi_2^* f) \in \vC^k_\fus(\cL_8 M; A).
\end{gathered}
	\label{E:ffe_chains}
\end{equation}
Thus, this complex consists of those fusion cochains which are multiplicative
with respect to the figure-of-eight product up to a fusion boundary. The image
of $\bar \pa$ on these chains lies in the space of fusion \v Cech cochains on
the space of figure-of-eight loops; though we do not need to consider it here,
$\bar \pa^2$ may be sensibly defined (it is not automatically trivial).
That
\eqref{E:ffe_chains} is a subcomplex follows from the fact that $\delta \bar \pa = \bar \pa \delta.$
It is also the case that $d \bar \pa = \bar \pa d$ on suitably defined spaces, in
particular as maps from $\vC^k(\cI M; A)$ to $\vC^{k+1}(\cI M; A)$ and from $\vC^k(\cL
M; A)$ to $\vC^{k+1}(\cL_8 M; A).$ 

The {\em loop-fusion} cohomology of $\cL M$ is then defined to be
\begin{equation}
\vHffe^\bullet(\cL M; A) = H^\bullet\big(\vCffe^\bullet(\cL M; A), \delta\big) \to
\vH^\bullet(\cL M ; A),
\label{E:forget_ffe}
\end{equation}
with its homomorphism, $f,$ to ordinary \v Cech cohomology induced by the inclusion of
$\vCffe^\bullet(\cL M; A)$ in $\vC^\bullet(\cL M; A).$

\section{Transgression and Regression} \label{S:trans_regr}
We proceed to the proof of the Theorem above.
\subsection{Transgression} \label{S:transgression}
We first construct the map $\lfT.$ Let $\alpha \in \vC^k(M; A)$ be a cocycle for
$k \geq 1$, and consider
\[
	\varepsilon^* \pa \alpha \in \vC_0^k(\cI M; A),\quad
	\pa \alpha = \pi_1^* \alpha^{-1}\pi_2^*\alpha \in \vC^k(M^2; A).
\]
Since $\delta \varepsilon^*\pa\alpha = \varepsilon^* \pa \delta \alpha = 1$ and
$\vC_0^\bullet(\cI M; A)$ is exact by Lemma~\ref{L:path_subcomplex}, it follows
that $\varepsilon^* \pa \alpha = \delta \beta$ for some $\beta \in
\vC_0^{k-1}(\cI M; A);$ set
\begin{equation}
\omega = d \beta = \varrho_1^* \beta^{-1}\,\varrho_2^* \beta \in \vC^{k-1}(\cL M; A).
\label{E:PathLift}
\end{equation}
Then $\varepsilon \circ \varrho_1 = \varepsilon \circ \varrho_2$ implies
\[
\delta\omega = d \delta \beta =
\varrho_1^*(\varepsilon^*\pa \alpha)^{-1}\,\varrho_2^*(\varepsilon^*\pa \alpha) = 1.
\]
Moreover $d^2 = 1$ so
\[
	d\omega = d^2 \beta = 1 \implies \omega \in \vCfus^{k-1}(\cL M; A).
\]
Finally, $\omega$ is fusion-figure-of-eight since $\bar \pa \omega = d\bar\pa \beta$ and 
$\bar \pa \beta$, which lies in $\vC_0^k(\cI M; A)$ by Lemma~\ref{L:path_subcomplex},
is a boundary. Indeed, for any path $\gamma = j(\gamma_1,\gamma_2)$, 
\begin{multline*}
\delta\bar \pa \beta(\gamma) = \bar\pa \varepsilon^*\pa \alpha(\gamma) =
\varepsilon^*\pa\alpha^{-1} (\gamma_1)\,\varepsilon^*\pa\alpha^{-1}(\gamma_2)\,
\varepsilon^*\pa\alpha(\gamma)
\\= \alpha\big(\gamma_1(0)\big)\alpha^{-1}\big(\gamma_1(1)\big)\alpha
\big(\gamma_2(0)\big)\alpha^{-1}(\gamma_2(1))\alpha^{-1}
\big(\gamma(0)\big)\alpha\big(\gamma(1)\big) = 1.
\end{multline*}
Thus $\bar \pa \beta$ is a cocycle and as $\vC_0^\bullet(\cI M; A)$ is acyclic
there exists $\eta \in \vC_0^{k-2}(\cI M; A)$ such that $\bar \pa
\beta = \delta \eta$. It follows that
\[
	\bar \pa \omega = d\bar\pa \beta = d\delta \eta = \delta d \eta, \quad
	d(d\eta) = 1 \implies \omega \in \vCffe^{k-1}(\cL M; A).
\]

Consider next the effect of the choices made. If $\beta' \in \vC_0^{k-1}(\cI M;
A)$ is another cochain such that $\delta \beta' = \varepsilon^*\pa \alpha$,
then $\delta (\beta' \beta^{-1}) = 1$ implies that $\beta' = \beta \delta\nu$
for some $\nu \in \vC_0^{k-2}(\cI M; A)$, which alters $\omega$ by the boundary
term $\delta d \nu$.  Similarly if $\alpha' = \alpha\delta\mu$ is another
representative for $[\alpha] \in \vH^k(M; A)$, it follows that $\omega' =
\omega \delta \sigma$, where $\sigma$ is the result of the same construction
applied to $\mu$. Thus the {\em transgression map}
\begin{equation}
	\lfT : \vH^k(M; A) \to \vHffe^{k-1}(\cL M; A), \quad \lfT[\alpha] = [\omega]^{-1}
	\label{E:transgression}
\end{equation}
is well-defined.

\subsection{Regression} \label{S:regression}
Next we define a map which is shown below to be the inverse of $\lfT.$ Suppose
$\omega \in \vCffe^{k-1}(\cL M; A)$ is a cocycle, so
\[
	\delta \omega = 1, \ d \omega = 1, \ \bar \pa \omega = \delta \nu, \ d \nu = 1.
\]
Then $\omega$ gives {\em descent data} for the trivial principal $A$-bundle
\begin{equation}
	\Gamma^{(k)} \times A \to \Gamma^{(k)}
	\label{E:trivial_Gamma}
\end{equation}
over $(\cU^2)^{(k)}.$ That is, multiplication by $\omega$ determines a
relation on the fibers, with the content of $d\omega =1$ being that this is
an equivalence relation so inducing a well-defined principal $A$-bundle
$P_k \to (\cU^2)^{(k)}$:
\[
\begin{gathered}
(P_k)_{(m,m')} = \set{(\gamma,a) \in \Gamma^{(k)}\times A : \varepsilon(\gamma)
= (m,m')} / \sim_\omega \\
(\gamma,a) \sim_\omega (\gamma',a') \iff a = \omega(\gamma,\gamma') a'.
\end{gathered}
\]

The condition $\delta \omega = 1$ implies that $P_k$ is a simplicial bundle
(see \cite{Brylinski-McLaughlin}, \cite{murray-stevenson}), i.e.\ the bundle
over $(\cU^2)^{(k+1)}$ consisting of the alternating tensor products of the
pullbacks of $P_k$ by the maps $\iota_j : (\cU^2)^{(k+1)} \to (\cU^2)^{(k)}$ is
canonically trivial:
\[
\delta P_k = \bigotimes_{j} \iota_j^\ast P_k^{(-1)^j} \cong (\cU^2)^{(k+1)}\times A \to
(\cU^2)^{(k+1)}.
\]
Similarly, $\nu$ determines a principal $A$-bundle
\[
	R_{k-1} = \Gamma^{(k-1)}\times A /\sim_\nu\to (\cU^3)^{(k-1)},
\]
and by functoriality of descent there is a canonical isomorphism
\begin{equation}
	\pa P_k \cong \delta R_{k-1} \to (\cU^3)^{(k)},\quad
	\pa P_k = \pi_1^*P_k^{-1}\otimes\pi_2^* P_k\otimes \pi_3^*P_k^{-1}.
	\label{E:iso_descent_bundles}
\end{equation}

The components of $(\cU^2)^{(k)}$ and $(\cU^3)^{(k-1)}$ are contractible so
there exist sections 
\[
	s : (\cU^2)^{(k)} \to P_k, \ \text{and}\  r : (\cU^3)^{(k-1)} \to R_{k-1}.
\]
These pull back to give sections $\delta s$ of $\delta P_k$ and $\delta r$ of
$\delta R_{k-1}$ and as $\delta P_k$ is canonically trivial $\delta s$
gives rise to a cocycle
\[
	\kappa = \delta s \in \vC^k(M^2; A), \ \delta \kappa = \delta \delta s = 1,
\]
where $\delta^2 s$ coincides with the canonical trivialization of $\delta^2 P$ for
any section $s.$ Another choice of section $s'$
alters $\kappa$ by a term $\delta\gamma $, where $\gamma \in \vC^{k-1}(M^2; A)$ is
fixed by $s' = s\,\gamma .$ Thus $[\kappa] \in \vH^k(M^2; A)$ is
determined by $\omega.$ Similarly, another choice $\omega'$ such that $\omega' =
\omega \delta \mu$, $d\mu = 1$ leads to a bundle $P'_k$ and a canonical
isomorphism $P'_k \cong P_k \otimes \delta Q_{k-1}$, where $Q_{k-1}$ is formed
by descent using $\mu.$ If $\kappa = \delta s$ and $\kappa' = \delta s'$ for
respective sections $s$ and $s'$ of $P_k$ and $P'_k,$ if $q$ is any section of
$Q_{k-1}$, and $s' = (s\otimes \delta q)\,\nu$ for some $\nu \in \vC^{k-1}(M^2;
A)$, then $\kappa' = \kappa \delta^2 q\,\delta \nu = \kappa\delta \nu.$
Thus the map from $\vHffe^{k-1}(\cL M;A)$ to $\vH^k(M^2; A)$ is well-defined.

Finally, we may compare $\pa s$ and $\delta r$ as sections of
\eqref{E:iso_descent_bundles}; let $\tau \in \vC^{k-1}(M^3; A)$ 
be determined by $\pa s = \delta r\, \tau$,
from which it follows that
\[
	\pa \kappa = \delta(\pa s) = \delta^2 r\,\delta \tau = \delta \tau \in \vC^{k}(M^3; A).
\]
(A different choice of $r$ leads to $\pa \kappa = \delta \tau'$ for some other $\tau' \in \vC^{k-1}(M^3; A).$)
Thus $\pa [\kappa] = 1 \in \vH^k(M^3; A)$ and so by Lemma~\ref{L:ex_cohom_M},
$[\kappa] = \pa[\alpha]$ for a unique class $[\alpha] \in \vH^k(M; A).$ It
follows that the {\em regression map} is well-defined by
\begin{equation}
	R : \vHffe^{k-1}(\cL M; A) \to \vH^k(M; A), \quad R[\omega] = [\alpha]^{-1}.
	\label{E:regression}
\end{equation}

\begin{prop}
The maps \eqref{E:transgression} and \eqref{E:regression} are inverses.
\label{P:inverses}
\end{prop}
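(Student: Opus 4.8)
The plan is to show that the two constructions are mutually inverse by tracking a cocycle through both compositions and observing that the auxiliary choices made in each direction can be matched up. Consider first $R \circ \lfT$. Starting from a cocycle $\alpha \in \vC^k(M;A)$, the transgression produces $\omega = d\beta$ where $\delta\beta = \varepsilon^*\pa\alpha$ on $\cI M$. The key observation is that the bundle $P_k \to (\cU^2)^{(k)}$ built from $\omega$ in the regression step is \emph{canonically trivialized} by $\beta$ itself: since $\omega(\gamma,\gamma') = \beta(\gamma)^{-1}\beta(\gamma')$ on the fiber product, the map $(\gamma,a) \mapsto \beta(\gamma)a$ descends to a trivializing section $s_\beta$ of $P_k$. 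With this section, $\kappa = \delta s_\beta = \delta\beta = \varepsilon^*\pa\alpha = \pa\alpha$ directly (pushed down to $M^2$), so the class $[\kappa] = \pa[\alpha]$ and hence, by the uniqueness in Lemma~\ref{L:ex_cohom_M}, the class recovered is exactly $[\alpha]$. The inversions in \eqref{E:transgression} and \eqref{E:regression} cancel, giving $R \circ \lfT = \id$. One must check that the choice of section $r$ of $R_{k-1}$ (coming from $\nu$ with $\bar\pa\omega = \delta\nu$) does not interfere; but since $\nu = d\eta$ by construction and $\eta$ similarly trivializes $R_{k-1}$, the term $\tau$ with $\pa s_\beta = \delta r_\eta\,\tau$ can be taken trivial, so $\pa\kappa = 1$ exactly at the cochain level and no ambiguity enters.

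For the other composition $\lfT \circ R$, I would start from a loop-fusion cocycle $\omega$, form $P_k$, $R_{k-1}$, pick sections $s, r$, and obtain $\kappa = \delta s \in \vC^k(M^2;A)$ with $\pa\kappa = \delta\tau$, hence $[\kappa] = \pa[\alpha]$ for a unique $[\alpha]$. Now apply transgression to (a representative of) $\alpha$: one needs $\beta \in \vC_0^{k-1}(\cI M;A)$ with $\delta\beta = \varepsilon^*\pa\alpha$, and then $\lfT[\alpha] = [d\beta]^{-1}$. The point is to show $[d\beta] = [\omega]^{-1}$, i.e.\ $d\beta$ and $\omega^{-1}$ differ by a loop-fusion coboundary $\delta(\text{something})$. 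I would argue as follows: the section $s$ of $P_k$ is, by definition of $P_k$ via $\omega$, the same data as a cochain $b \in \vC^{k-1}(\cI M;A)$ (namely $b(\gamma) = $ the representative of $s(\varepsilon(\gamma))$ in the trivial bundle $\Gamma^{(k)}\times A$) satisfying $\delta b = \varepsilon^*\kappa' $ for an appropriate lift, and the descent relation forces $b(\gamma)b(\gamma')^{-1} = \omega(\gamma,\gamma')$, i.e.\ $\omega = db$ with $b$ pulled back from $M^2$ up to the cochain $\varepsilon^*(\text{stuff})$. Matching $b$ against the transgression's $\beta$ (both solve a $\delta$-equation differing by $\varepsilon^*$ of a cochain on $M$) shows $d\beta$ and $\omega$ agree modulo $\delta$-coboundaries and the $d$ of $\varepsilon^*$-pullbacks, which vanish under $d$; the loop-fusion refinement of the coboundary is guaranteed because all the correcting cochains live on path space and are handled by Lemma~\ref{L:path_subcomplex} exactly as in the construction of $\lfT$.

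I expect the main obstacle to be bookkeeping the \emph{compatibility of the auxiliary data} across the two directions — in particular, verifying that the section $s$ of $P_k$ chosen in the regression step can, without loss of generality, be taken to arise from a global cochain $b$ on $\cI M$ (rather than merely existing locally over the contractible components of $(\cU^2)^{(k)}$), and that the resulting $b$ lies in the image of $\delta$ up to pullback from $M$ so that the transgression construction applied to the recovered $\alpha$ reproduces $\omega$ on the nose up to a loop-fusion coboundary. This is where the roles of the three differentials $\delta$, $d$, $\bar\pa$ and the two simplicial structures must be kept straight; the identity $d\bar\pa = \bar\pa d$ and the acyclicity of $\vC_0^\bullet(\cI M;A)$ are the tools that make it go through. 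Once that identification is in place, both composites reduce to the identity by the well-definedness arguments already established in Sections~\ref{S:transgression} and \ref{S:regression}, and in particular by the uniqueness clause of Lemma~\ref{L:ex_cohom_M}.
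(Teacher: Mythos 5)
Your proposal is correct and is essentially the paper's own argument: in both composites the decisive step is the dictionary between a section $s$ of $P_k$ and the path-space cochain $\wt s$ (your $b$) satisfying $\delta \wt s = \varepsilon^*\delta s$ and $d\wt s = \omega$, combined with the independence-of-choices established in \S\ref{S:transgression}--\S\ref{S:regression} and the uniqueness statement of Lemma~\ref{L:ex_cohom_M}. The only real difference is cosmetic: for the composite $\lfT\circ R$ the paper first replaces $s$ by $s\nu^{-1}$ so that $\kappa=\pa\alpha$ exactly and then takes $\beta:=\wt s$ (noting $\wt s$ is trivial on constant paths), obtaining $d\beta=\omega$ on the nose, rather than comparing a general $\beta$ with $b$ modulo coboundaries as you do.
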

\begin{proof}
To see that $\lfT R = \id$ fix a cocycle $\omega \in \vCffe^{k-1}(\cL M; A)$ and
let $\alpha \in \vC^k(M; A)$ represent $R[\omega]^{-1}$, so that $\pa \alpha =
\kappa\delta \nu$ for some $\nu \in \vC^{k-1}(M^2; A)$, where $\kappa = \delta
s \in \vC^{k}(M^2; A)$ for a choice of section $s$ of the bundle $P_k$.
Replacing $s$ by $s\nu^{-1}$ if necessary, we may assume that $\pa \alpha =
\kappa = \delta s.$

Consider the transgression of $\alpha$. This involves a choice of $\beta
\in \vC_0^{k-1}(\cI M; A)$ such that $\delta \beta = \varepsilon^* \pa \alpha =
\varepsilon^* \kappa$ but there is a natural choice available. Namely, the
section $s$ of $P_k$ lifts canonically to a section of the trivial $A$-bundle
over $\Gamma^{(k)},$ from which $P_k$ is descended, and so defines a cochain
\[
	\wt s \in \vC_0^{k-1}(\cI M; A), \quad
	\wt s (\gamma) = a \iff s\bpns{\varepsilon(\gamma)} = [(\gamma,a)] \in P_k.
\]
That $\wt s$ is trivial on constant paths is a consequence of the fact that the
fusion condition implies that the descent data $\omega$ for $P_k$ is
trivial on constant loops. Since $\delta P_k$ is trivially descended from
the trivial bundle over $\Gamma^{(k+1)}$,
\[
\delta \wt s = \wt{\pns{\delta s}} = \varepsilon^\ast \delta s = \varepsilon^*\kappa,
\]
and hence $\beta = \wt s \in \vC_0^{k-1}(\cI M ; A)$ is an element such that
$\delta \beta = \varepsilon^* \kappa.$ It then follows that $d\beta
= \varrho_1^\ast \wt s^{-1}\,\varrho_2^\ast \wt s \equiv \omega \in \vCffe^{k-1}(\cL M; A)$
since
\[
\begin{gathered}
\wt s(\gamma)\,\wt s(\gamma')^{-1} = a\,{a'}^{-1}, \quad \text{such that} \\
s\bpns{\varepsilon(\gamma)} = s\bpns{\varepsilon(\gamma')} =
[(\gamma, a)] = [(\gamma',a')] \iff a = \omega(\gamma,\gamma')\,a'.
\end{gathered}
\]

In the other direction, fix a cocycle $\alpha \in \vC^{k}(M^2; A)$ and let
$\omega \in \vCffe^{k-1}(\cL M; A)$ represent $T[\alpha]^{-1}$, given by
$\omega = d \beta$ where $\delta \beta = \varepsilon^*\pa \alpha \in
\vC_0^{k}(\cI M; A).$ The regression of $\omega$ involves a
choice, of section of the bundle $P_k$, but here too there is a
natural one which recovers $\pa \alpha \in \vC^k(M^2; A)$.  Indeed, since
$\omega = \varrho_1^\ast \beta^{-1}\,\varrho_2^\ast \beta$, the equivalence
relation defining $P_k$ takes the particular form
\[
P_k \ni [(\gamma, a)] = [(\gamma', a')] \iff a = \beta(\gamma)\beta(\gamma')^{-1} a',
\]
and an appropriate section of $P_k$ is defined by
\[
	s(m,m') = [(\gamma,\beta(\gamma))] = [(\gamma', \beta(\gamma'))],
\]
since this equivalence class is independent of the particular $\gamma \in
\varepsilon^{-1}(m,m').$ With $s$ so defined, it follows that $\delta s \in
\vC^k(M; A)$ is given by
\[
\delta s(m,m') = [(\gamma, \delta \beta(\gamma)] 
= [(\gamma, \varepsilon^*\pa \alpha(\gamma))] = \pa \alpha(m,m'). \qedhere
\]
\end{proof}

\subsection{Compatibility} \label{S:compatibility}

The commutativity of the diagram \eqref{E:TRDiag} asserts that the `enhanced
transgression' map constructed above is compatible with transgression in the
usual sense. The latter corresponds to pullback of cohomology under the
evaluation map followed by projection onto the second factor under the
decomposition for the product: 
\begin{multline}
\ev^*:H^k(M;A)\to H^k(\bbS\times\cL M;A)\\
=H^k(\cL M;A)\oplus H^{k-1}(\cL M;A) \to H^{k-1}(\cL M;A).
\label{E:RegTran}
\end{multline}

To realize this in \v Cech cohomology, fix a small parameter $\delta >0$
and consider the open cover $\cS = \bigsqcup_{(t,l) \in \bbS \times \cL M} S_{t,l}$ of $\bbS\times \cL M$,
where
\begin{equation}
\begin{gathered}
S_{t,l}=\set{(t',l')\in\bbS\times\cL M : l'\in\Lambda_l,\ t' \in (t - \delta, t + \delta),\
l'(t')\in U_{l(t)}\ },\\
S_{t,l}\longrightarrow \Lambda_l,\quad S_{t,l}\longrightarrow I_{t}\subset\bbS,
\quad
\ev:S_{t,l}\ni(t',l')\longmapsto l'(t')\in U_{l(t)}.
\end{gathered}
\label{E:CechEval}
\end{equation}
The interval $I_t = (t-\delta,t+\delta)\subset\bbS$ is to be interpreted as the
`short' signed interval on $\bbS.$ This is a good cover, with respect to which
we consider the \v Cech complex on $\bbS\times \cL M.$ The evaluation map
$\ev : \bbS\times \cL M \to M$ and projections $\bbS \times \cL M \to \cL M$
and $\bbS \times \cL M \to \bbS$ lift to maps of the covers $\cS \to \cU$, $\cS
\to \Lambda$ and $\cS \to \cV$, respectively, where $\cV$ is the cover of
$\bbS$ by intervals of length $2\delta$ around each point. 

The first factor in the product \eqref{E:RegTran} corresponds to pullback to
$\cL M$ under the evaluation map at any fixed point on the circle.
Consequently, to consider the projection to the second factor of
\eqref{E:RegTran} we modify the pullback $\ev^*\alpha \in \vC^k(\bbS
\times \cL M; A)$ to
\begin{equation}
	\alpha' = (\ev_0^*\alpha)^{-1}\,\ev^*\alpha \in \vC^k(\bbS\times \cL M; A)
\label{E:ModPB}
\end{equation}
instead, where $\ev_0 : \bbS \times \cL M \ni (t,\ell)\mapsto \ell(0) \in M$
factors through the projection to $\cL M.$ Then the class of \eqref{E:ModPB}
projects to zero in $\vH^k(\cL M; A)$ and has the same projection as $\ev^*\alpha$
to $\vH^{k-1}(\cL M; A).$

To compute the latter, consider the space $[-1,1]\times \cL M$ which maps to
$\bbS\times\cL M$ by the identification of the endpoints. This has a good
cover $\cT = \bigsqcup_{t,l} T_{t,l}$ where $T_{t,l}$ is defined 
as in \eqref{E:CechEval} except that the interval is restricted to
$[-1,1].$ The map to $\bbS\times \cL M$ then lifts to a continuous map of
the covers. The image of \eqref{E:ModPB} lies in the subcomplex
$\vC_0^k([-1,1]\times \cL M; A)$ of chains which are trivial at $\set{0}\times
\cL M.$ This subcomplex is acyclic as in the proof of
Lemma~\ref{L:path_subcomplex} since $[-1,1]\times \cL M$ retracts onto $\set{0}\times
\cL M.$ Thus
\[
	\alpha' = \delta \sigma, \quad \sigma \in \vC^{k-1}([-1,1]\times \cL M; A),
\]
and the transgression class is represented by the difference
\begin{equation}
\pns{\sigma \rst_{\set{1}\times \cL M}} \pns{\sigma^{-1} \rst_{\set{-1}\times \cL M}}
\in \vC^{k-1}(\cL M; A).
	\label{E:transgression_class}
\end{equation}
That this is a cocycle follows from the fact that its \v Cech differential is
the difference of $\alpha'$ at $1$ and $-1$ which is trivial since
$\alpha'$ is pulled back from the circle.

On the other hand, the initial portion of the enhanced transgression
construction in \S\ref{S:transgression} may be modified as follows. Consider
the pullback
\[
\begin{gathered}
\wt \varepsilon^* \pa \alpha \in \vC^k([0,1]\times \cI M; A), \\
\wt \varepsilon : [0,1]\times \cI M \to M^2, \quad \wt \varepsilon(t,\gamma) =
\big(\gamma(0),\gamma(t)\big).
\end{gathered}
\]
As before this lies in an exact subcomplex, so $\wt \varepsilon^* \pa
\alpha = \delta \wt\beta$ where $\wt\beta \in \vC^{k-1}([0,1]\times \cI M; A),$
the restriction $\beta = \wt\beta \rst_{\set{1} \times \cI M}$ to a
cochain on $\cI M$ reduces to the earlier construction and $\wt \beta
\rst_{\set{0} \times \cI M}$ is trivial. Then the product
\[
\begin{gathered}
\sigma = \varsigma_1^* \wt \beta\,\varsigma_2^* \wt \beta \in
\vC^{k-1}([-1,1]\times \cI^{[2]} M; A), \\
\varsigma_i : [-1,1]\times \cI^{[2]} M \to [0,1]\times \cI M,\\
\varsigma_1\big(t,(\gamma_1,\gamma_2)\big)  = (\max(0,t), \gamma_1),
\quad\varsigma_2\big(t,(\gamma_1,\gamma_2)\big) = (-\min(0,t),\gamma_2)
\end{gathered}
\]
is a cochain on $[-1,1]\times \cL M$ with differential equal to $\alpha'$. Indeed, 
\begin{multline*}
\delta \sigma \big(t,\ell) 
= (\varsigma_1^*\delta \wt\beta\varsigma_2^*\delta \wt \beta)\big(t,(\gamma_1,\gamma_2)\big) 
= \begin{cases} \alpha(\gamma_1(t))\alpha^{-1}(\gamma_1(0)), & 0 \leq t \leq 1\\
	\alpha(\gamma_2(-t))\alpha^{-1}(\gamma_2(0)), & -1 \leq t \leq 0\end{cases}
\\= \alpha\big(\ell(t)\big)\alpha^{-1}\big(\ell(0)\big),
\end{multline*}
where $\ell = \psi(\gamma_1,\gamma_2).$ Finally, observe that the transgression
class \eqref{E:transgression_class} is represented by the `enhanced transgression'
class $d \beta^{-1}$:
\[
\pns{\sigma \rst_{\set{1}\times \cL M}}
\pns{\sigma^{-1} \rst_{\set{-1}\times \cL M}}(\gamma_1,\gamma_2)
= \wt \beta(1,\gamma_1)\,\wt \beta^{-1}(1,\gamma_2) = d \beta^{-1}(\gamma_1,\gamma_2).
\]
This completes the proof of the Theorem.

\bibliography{Loop-Fusion}

\providecommand{\bysame}{\leavevmode\hbox to3em{\hrulefill}\thinspace}
\providecommand{\MR}{\relax\ifhmode\unskip\space\fi MR }
\providecommand{\MRhref}[2]{%
  \href{http://www.ams.org/mathscinet-getitem?mr=#1}{#2}
}
\providecommand{\href}[2]{#2}
\begin{thebibliography}{God73}

\bibitem[Bar91]{barrett}
J.~W. Barrett, \emph{{Holonomy and path structures in general relativity and
  Yang-Mills theory}}, International journal of theoretical physics \textbf{30}
  (1991), no.~9, 1171--1215.

\bibitem[BM96]{Brylinski-McLaughlin}
J-L. Brylinski and D.~A. McLaughlin, \emph{{The geometry of degree-4
  characteristic classes and of line bundles on loop spaces II}}, Duke Math. J.
  \textbf{83} (1996), no.~1, 105--139.

\bibitem[Bry93]{Brylinski3}
J-L. Brylinski, \emph{Loop spaces, characteristic classes and geometric
  quantization}, Progress in Mathematics, vol. 107, Birkh\"auser Boston, Inc.,
  Boston, MA, 1993.

\bibitem[CP94]{caetano}
A.~Caetano and R.~F. Picken, \emph{An axiomatic definition of holonomy},
  International Journal of Mathematics \textbf{5} (1994), no.~06, 835--848.

\bibitem[God73]{godement}
R.~Godement, \emph{Topologie alg\'ebrique et th\'eorie des faisceaux}, Hermann,
  Paris, 1973, Troisi{\`e}me {\'e}dition revue et corrig{\'e}e, Publications de
  l'Institut de Math{\'e}matique de l'Universit{\'e} de Strasbourg, XIII,
  Actualit{\'e}s Scientifiques et Industrielles, No. 1252.

\bibitem[KM13]{Kottke-Melrose-FLS}
C.~Kottke and R.~Melrose, \emph{Equivalence of string and fusion loop-spin
  structures}, arXiv:1309.0210, 2013.

\bibitem[MS03]{murray-stevenson}
M.~K. Murray and D.~Stevenson, \emph{Higgs fields, bundle gerbes and string
  structures}, Communications in Mathematical Physics \textbf{243} (2003),
  no.~3, 541--555.

\bibitem[ST]{Stolz-Teichner2005}
S.~Stolz and P.~Teichner, \emph{The spinor bundle on loop spaces}, Preprint.

\bibitem[Tel63]{teleman}
M.~C. Teleman, \emph{{Sur les connexions infinit{\'e}simales qu'on peut
  d{\'e}finir dans les structures fibr{\'e}es diff{\'e}rentiables de base
  donn{\'e}e}}, Annali di Matematica Pura ed Applicata \textbf{61} (1963),
  no.~1, 379--412.

\bibitem[Wal09]{Waldorf-I}
Konrad Waldorf, \emph{{Transgression to loop spaces and its inverse, I:
  Diffeological bundles and fusion maps}}, arXiv:0911.3212, 2009.

\bibitem[Wal10]{Waldorf-II}
K.~Waldorf, \emph{{Transgression to loop spaces and its inverse, II: Gerbes and
  fusion bundles with connection}}, arXiv:1004.0031, 2010.

\bibitem[Wal12]{Waldorf-III}
\bysame, \emph{{Transgression to loop spaces and its inverse, III: Gerbes and
  thin fusion bundles}}, Advances in Mathematics \textbf{231} (2012), no.~6,
  3445--3472.

\end{thebibliography}
\bibliographystyle{amsalpha}

\end{document}